\theoremstyle{plain}
\newtheorem{theorem}{Theorem}[section]
\newtheorem{lemma}[theorem]{Lemma}
\theoremstyle{definition}
\newtheorem{definition}[theorem]{Definition}
\newtheorem{example}[theorem]{Example}
\newtheorem{remark}[theorem]{Remark}
\theoremstyle{remark}
\numberwithin{equation}{section}
\newcommand{\R}{\mathbb R}
\newcommand{\Id}{\mathrm{d}}
\newcommand{\IR}{\mathbb{R}}
\def\bf{\mathbf}
\newcommand{\loc}{\mathrm{loc}}
\newcommand{\IB}{B}
\newcommand{\dom}{\mathrm{Dom}}
\newcommand{\IN}{\mathbb{N}}
\newcommand{\e}{e}
\newcommand\newdot{{\kern.8pt\cdot\kern.8pt}}
\def\nbull{{\raise1.5pt\hbox{\bf .}}}
\title[Neumann cut-offs and essential self-adjointness on manifolds with boundary]{Neumann cut-offs and essential self-adjointness on complete Riemannian manifolds with boundary}%
\author{Davide Bianchi}
\address{School of Mathematics (Zhuhai), Sun Yat-sen University\\
Zhuhai campus, Haiqin Building No.~2, 519082 Zhuhai, China}
\email{bianchid@mail.sysu.edu.cn}
\author{Batu Güneysu}
\address{Fakultät für Mathematik, Technische Universität Chemnitz\\
Reichenhainer Straße 41, 09126 Chemnitz, Germany}
\email{batu.gueneysu@math.tu-chemnitz.de}
\author{Alberto G. Setti}
\address{DiSAT, Università dell'Insubria, Como-Varese\\
via Valleggio 11, 22100 Como, Italy}
\email{alberto.setti@uninsubria.it}
\thanks{The first  author is supported by the Startup Fund of Sun Yat-sen University. The third  author is member of the GNAMPA INdAM group. }
\keywords{Manifolds with boundary, Neumann cut-offs   , Neumann Laplacian, essential self-adjointness}
\subjclass{
58J05, 
58J50, 
35J25
}
\begin{document}

\begin{abstract}We generalize some fundamental results for noncompact Riemannian manfolds without boundary, that only require completeness and no curvature assumptions, to manifolds with boundary: let $M$ be a smooth Riemannian manifold with boundary $\partial M$ and let $\hat{C}^\infty_c(M)$ denote the space of smooth compactly supported cut-off functions with vanishing normal derivative, \emph{Neumann cut-offs}. We show, among other things, that under completeness:
	\begin{itemize}
		\item $\hat{C}^\infty_c(M)$ is dense in $W^{1,p}(\mathring{M})$ for all $p\in (1,\infty)$; this generalizes a classical result by Aubin~\cite{aubin} for $\partial M=\emptyset$.
		\item $M$ admits a sequence of first order cut-off functions in $\hat{C}^\infty_c(M)$; for $\partial M=\emptyset$ this result can be traced back to Gaffney~\cite{gaffney}.
		
		\item the Laplace-Beltrami operator with domain of definition $\hat{C}^\infty_c(M)$ is essentially self-adjoint; this is a generalization of a classical result by Strichartz~\cite{strichartz} for $\partial M=\emptyset$.
		
	\end{itemize}

\end{abstract}

\maketitle

\section{First order Sobolev density results, and first order sequences of Neumann cut-off functions}

Let $M$ be a smooth connected Riemannian manifold with boundary with dimension $m$, where if there is no danger of confusion we will denote the Riemannian metric $g$ on $M$ simply with $(\cdot,\cdot)$. We consider $M$ as a metric space with respect to its geodesic distance $\varrho(x,y):=\inf L(\gamma)$, where the infimum is taken over all piecewise smooth curves $\gamma:[0,1]\to M$ joining $x$ and $y$, and where $L(\gamma):=\int^b_a |\dot{\gamma}(t)|\Id t$ denotes the lenght of such a curve. The induced open balls are denoted with $B(x,r)$. Note that $\varrho$ induces the original topology on $M$ \cite{alexander}. The completeness of $M$ will always mean its completeness as a metric space, and by the Hopf-Rinow Theorem for locally compact lenght spaces, this is equivalent to bounded sets being relatively compact.

\begin{remark}
	Assume $M'$ is an open connected smooth subset of a smooth complete Riemannian manifold without boundary. Then $M:=\overline{M'}$ is a complete smooth Riemannian manifold with boundary. This follows from the fact that the (intrinsic!) geodesic distance on $M$ induces the topology on $M$.
\end{remark}

We denote by $\mathring{M}  = M\backslash \partial M$ the interior of $M$ in the sense of manifolds with boundary. Following the notation of \cite{Grigoryan_laplace_eq} and \cite{IPS_Crelle}, for any open set $\Omega$ in $M$ we set $\mathring{\Omega} = \Omega \cap \mathring{M}$, $\partial\Omega= \partial \mathring{\Omega}$ and let $\partial_0 \Omega = \partial \Omega \cap \mathring{M}$ be the Dirichlet boundary of $\Omega$, and $\partial_1 \Omega =  \bar{\Omega} \cap \partial M$ be the Neumann boundary of $\Omega$, so that $\partial\Omega= \partial_0\Omega\cup \partial_1\Omega$.\vspace{1mm}

The symbol $\partial_\nu$ denotes the outward pointing normal derivative on $\partial M$. \vspace{1mm}

We understand all $L^p$-norms and $L^2$-scalar products with respect to the Riemannian volume measure $\mu$ given locally by $\mu(\Id x)=  \sqrt{g}(x) \Id x$, where of course $\mu(M\setminus \mathring{M})=\mu(\partial M)=0$, as $\partial M$ is assumed to be smooth.\vspace{1mm}

Let $\nabla$ denote the Levi-Civita connection on $M$, and let $\Delta$ denote the (negative definite) Laplace-Beltrami operator, so that we have Green's formula
$$
\left\langle \Delta f_1, f_2 \right\rangle = -\left\langle \nabla f_1\nabla f_2\right\rangle + \int_{\partial M} (\partial_\nu f_1)  f_2  \    \Id\sigma,
$$
valid for all $f_1,f_2\in C^\infty(M)$ one of which having a compact support, where $\sigma$ is the $(m-1)$-dimensional Hausdorff measure of the metric space $M$. \vspace{1mm}

We recall that for every smooth Riemannian manifold without boundary $N$ the Banach space one defines
$$
W^{1,p}(N)=\{f\in L^p(N): \nabla f\in \Gamma_{L^p}(N,TN)\},
$$
where expressions of the form $Pf$ with $f\in L^1_\loc(N)$ and $P$ a differential operator with smooth coefficients in $N$, are understood in the usual sense of distribution theory of open manifolds. Then $W^{1,p}(N)$ becomes a Banach space with respect to the norm $\left\|f\right\|_p+\left\|\nabla f\right\|_p$. \vspace{1mm}

It has been shown in \cite{giona}, Corollary 2.3, that if $M$ is complete then $C^{\infty}_c(M)$ is dense in $W^{1,p}(\mathring{M})$.\vspace{1mm}

The Neumann realization of $-\Delta$ in $L^2(M)$ is defined as the nonnegative self-adjoint operator $H$ in $L^2(M)$ associated with the regular, strongly local Dirichlet form
$$
\mathscr{E}(f_1,f_2):=\left\langle \nabla f_1,\nabla f_2\right\rangle= \int_M (\nabla f_1,\nabla f_2)\Id \mu,
$$
with $\dom(\mathscr{E})=W^{1,2}(\mathring{M})$. In other words, $H$ is the uniquely determined nonnegative self-adjoint operator in $L^2(M)$ with $\dom(H)\subset W^{1,2}(\mathring{M})$ and
\[
\left\langle Hf_1,f_2\right\rangle = \left\langle \nabla f_1,\nabla f_2 \right\rangle \quad\text{ for all $f_1\in \dom(H)$, $f_2\in W^{1,2}(\mathring{M})$.}
\]
In particular, for all $f\in \dom(H)$ one has $\Delta f\in L^2(M)$ with $Hf=-\Delta f$.

\begin{remark}\label{edcv} It is a standard fact that the intrinsic distance
	$$
	\varrho_{\mathrm{intr}}(x,y):= \sup\{f(x)-f(y): f\in W^{1,2}_\loc(M)\cap C(M), |\nabla f|\leq 1\},
	$$
	is equal to $\varrho$, where $W^{1,2}_\loc(M)$ is defined as all $\mu$-equivalence classes of Borel functions $f$ on $M$ such that $f|_U\in W^{1,2}(\mathring{U})$ for all open relatively compact $U\subset M$.
\end{remark}

The  following space will be in the center of this work:

\begin{definition} We call
	$$
	\hat{C}^\infty_c(M):=\{f\in C_c^{\infty}(M): \partial_\nu f=0 \}
	$$
	the space of \emph{Neumann cut-off functions on $M$}.
\end{definition}

Clearly, $\hat{C}^\infty_c(M)$ is dense in $L^p(M)$ for all $p\in [1,\infty)$. Defining $\hat{W}^{1,p}_0(M)$ to be the closure of $\hat{C}^\infty_c(M)$ in $W^{1,p}(\mathring{M})$, we obtain:

\begin{theorem}\label{thm D density}
	If $M$ is complete and $p\in (1,\infty)$, then one has $W^{1,p}(\mathring{M})=\hat{W}^{1,p}_0(M)$.
\end{theorem}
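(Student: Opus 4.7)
The plan is to reduce to the known density of $C^\infty_c(M)$ in $W^{1,p}(\mathring{M})$ from \cite{giona}, and then show that any $f\in C^\infty_c(M)$ can be approximated in $W^{1,p}$-norm by a family of Neumann cut-offs obtained by deforming $f$ along a thin collar of $\partial M$ so as to kill its normal derivative. Completeness enters only through the reduction step; the deformation is purely local, relying solely on the smoothness of $\partial M$ and the compactness of $\spt(f)$.

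Concretely, I would first fix a smooth collar map $\Phi\colon V\times [0,\delta)\to M$ on an open, relatively compact neighborhood $V\subset\partial M$ of the compact set $\spt(f)\cap\partial M$, so that in the induced coordinates $(y,t)$ on the collar the metric takes the form $dt^2+g_t(y)$, the inclusion $\{t=0\}\hookrightarrow\partial M$ is the identity, and the outward normal derivative satisfies $\partial_\nu=-\partial_t|_{t=0}$. Next, I would fix once and for all a smooth function $\chi\colon[0,\infty)\to[0,\infty)$ with $\chi(0)=0$, $\chi'(0)=0$ and $\chi(s)=s$ for $s\geq 1$, and set $\chi_\epsilon(t):=\epsilon\,\chi(t/\epsilon)$, so that $\chi_\epsilon(t)=t$ for $t\geq\epsilon$, $\chi_\epsilon'(0)=0$, and $\sup_t|\chi_\epsilon'(t)|$ is uniformly bounded in $\epsilon$.

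On the collar I then define $f_\epsilon(y,t):=f(y,\chi_\epsilon(t))$, and set $f_\epsilon:=f$ off the collar; this is consistent since $\chi_\epsilon(t)=t$ for $t\geq\epsilon$, so $f_\epsilon\in C^\infty_c(M)$ with support in a fixed compact set. The chain rule gives
\[
(\partial_t f_\epsilon)(y,0)=(\partial_t f)(y,0)\,\chi_\epsilon'(0)=0,
\]
hence $\partial_\nu f_\epsilon=0$ on $\partial M$, i.e.\ $f_\epsilon\in\hat{C}^\infty_c(M)$. For convergence I observe that $f_\epsilon\equiv f$ off the strip $S_\epsilon:=\Phi(V\times[0,\epsilon])\cap\spt(f)$, whose $\mu$-volume is $O(\epsilon)$. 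On $S_\epsilon$, since $\chi_\epsilon'$ is uniformly bounded in $\epsilon$ and $f$ has bounded derivatives, both $|f_\epsilon-f|$ and $|\nabla f_\epsilon-\nabla f|$ are $L^\infty$-bounded independently of $\epsilon$, so
\[
\|f_\epsilon-f\|^p_{W^{1,p}(\mathring{M})}\leq C\,\mu(S_\epsilon)\longrightarrow 0\qquad(\epsilon\downarrow 0).
\]

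The main technical point is simply organizing the collar neighborhood so that it covers all of $\spt(f)\cap\partial M$ with uniform control on the metric coefficients and their derivatives; this is a standard application of the tubular-neighborhood theorem using compactness of $\spt(f)\cap\partial M$ and smoothness of $\partial M$, and requires no curvature or global completeness assumption beyond what the initial reduction to $C^\infty_c(M)$ already consumes.
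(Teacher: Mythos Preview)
Your proposal is correct and follows essentially the same approach as the paper: reduce via \cite{giona} to approximating $f\in C^\infty_c(M)$, then compose $f$ with a smooth reparametrization of the normal coordinate in a Fermi/collar neighborhood of $\spt(f)\cap\partial M$ so as to kill $\partial_\nu$, and conclude by dominated convergence on the shrinking collar strip. The only cosmetic differences are that the paper first localizes with a partition of unity subordinate to finitely many Fermi charts, and its reparametrization $h_n$ is constant (equal to $0$) near $s=0$ rather than merely having $h_n'(0)=0$.
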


\begin{proof}
	As noted above, $C^{\infty}_c(M)$ is dense in $W^{1,p}(\mathring{M})$. It remains to show that any $f\in C^{\infty}_c(M)$ can be approximated by a sequence $f_n$ in $\hat{C}^\infty_c(M)$. To this end, we are going to carefully adapt the arguments which lead to the proof of Theorem 7.2.1 in \cite{Davies} to our geometric setting.\\
	First of all, using Fermi coordinates, every  $y\in \partial M$ has a chart  $(U_y,\phi_y)$ with
	$\phi_y(U_y)= D_y\times [0, \delta_y)$
	where $D_y$ is a disc centered at $0$ in $\mathbb{R}^{m-1}$, $\phi_y(y)=0$ and, for every $u\in D_y$, $\phi_y^{-1}(u,t)$ is a normal geodesic issuing from $\phi_y^{-1}(u,0)\in \partial M$. By compactness, $\mathrm{supp}\, f\cap \partial M$ is covered by finitely many sets  $\frac 12 U_i=\phi_{y_i}^{-1}(\frac 12 D_{y_i})\times [0, \delta_{y_i}/2)$, $i=1,\dots, l$. Similarly,  there exist finitely many charts $(U_i, \phi_i)$, $i=l+1, \dots, l'$ such that $U_i\cap \partial M=\emptyset$ and $\{\frac 12 U_i\}$, $i=l+1,\dots, l'$ cover $\mathrm{supp}f \setminus \cup_{i=1}^l U_i$. The usual proof shows that there exists a partition of unity $\rho_i$, $i=1, \dots, l'$, subordinate to the covering $\{U_i\}$, $i=1,\dots, l'$.
	
	Since $f=\sum_{i=1}^{l'} \rho_i f$
	it suffices to show that for every $i=i,\dots, l$ the function $\rho_if$ can be approximated by functions in $\hat{C}^\infty_c(M)$ in the $W^{1,p}$ norm.
	
	To this end, let $f$ be a $C^\infty$ function with compact support in a chart $(U_y,\phi_y)$ with the properties listed above.
	Next, let $h_n:\R\to \R$ be a smooth function with $h_n(s)=0$ if $s\leq 1/n$, $h_n(s)=s$ if $s\geq 2/n$ and $0\leq h'_n\leq 3$, and define $\tau_n:U_y\to U_y$ by
	$
	\tau_n(x)=\phi_y(y,h_n(s))$ if  $x=\phi_y(y,s)$ with $ (y,s)\in D_y\times [0,\delta_y)$.
	Finally, define $f_n(x)= f(\tau_n(x))$ and note that by construction $f_n$ is smooth, compactly supported in $U_y$ and
	\[
	f_n(\phi(y,s))=f_n(\phi(y,0))=f(y) \, \text{ if } \, y\in D_y, \, s\in[0,1/n),
	\]
	so that $f_n\in \hat{C}^\infty_c(M)$. Since $|f-f_n|^p+|\nabla(f-f_n)|^p$ is compactly supported, bounded above and converges to zero pointwise, it follows by dominated convergence that
	\[
	\left\|f-f_n\right\|_p+\left\|\nabla(f-f_n)\right\|_p\to 0 \text{ as } n\to \infty,
	\]
	as required to complete the proof.
	
\end{proof}

	

Next we are going to prove:

\begin{theorem}
	\label{thm Neumann cutoffs}
	\emph{a)} $M$ is complete, if and only if there exists a sequence of \emph{first order cut-off functions on $M$}, that is, a sequence $(\chi_n)\subset C^{\infty}_c(M)$, such that
	\begin{itemize}
		\item[(i)] $0\leq \chi_n \leq 1$,
		\item[(ii)] for each compact $K\subset M$ there exists $n_K\in \IN$ with $\chi_n\equiv 1$ for all $n\geq n_K$,
		\item[(iii)] $\left\|\nabla \chi_n \right\|_\infty\to 0$ as $n\to\infty$.
	\end{itemize}
	\emph{b)} If $M$ is complete, then there exists a sequence of \emph{first order Neumann cut-off functions}, that is, a sequence $(\chi_n)\subset C^{\infty}_c(M)$ of first order cut-off functions such that in addition
	\begin{itemize}
		\item[(iv)] $\partial_\nu \chi_n=0$.
	\end{itemize}
\end{theorem}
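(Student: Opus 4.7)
For ``$\Rightarrow$'', I would fix $x_0 \in \mathring M$ and consider $r(x) := \varrho(x, x_0)$; by completeness and Hopf--Rinow for locally compact length spaces, $r$ is proper and $1$-Lipschitz. A standard mollification in a locally finite atlas (or a Greene--Wu type smoothing) yields a smooth proper $\tilde r: M \to [0, \infty)$ with $|\tilde r - r| \leq 1$ and $|\nabla \tilde r|_\infty \leq 2$. For a fixed $\psi \in C^\infty(\R, [0, 1])$ with $\psi \equiv 1$ near $0$, $\psi \equiv 0$ on $[3, \infty)$, $|\psi'| \leq 1$, the family $\chi_n := \psi(\tilde r / n)$ then satisfies (i)--(iii). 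For ``$\Leftarrow$'', if $M$ were not complete, Hopf--Rinow produces a Cauchy sequence $(x_k) \subset M$ with no convergent subsequence in $M$; then $\mathrm{supp}(\chi_n)$ contains only finitely many $x_k$ for each $n$, so $\chi_n(x_k) \to 0$ as $k \to \infty$. Choosing $n$ large enough that $\chi_n(x_1) = 1$, using the Lipschitz estimate $|\chi_n(x_1) - \chi_n(x_k)| \leq \|\nabla \chi_n\|_\infty \varrho(x_1, x_k) \leq D \|\nabla \chi_n\|_\infty$ with $D := \sup_k \varrho(x_1, x_k) < \infty$, and letting $k \to \infty$ then $n \to \infty$, yields the contradiction $1 \leq 0$.

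\textbf{Part (b).} I would upgrade the cut-offs $(\chi_n)$ from part (a) to Neumann cut-offs by deforming each $\chi_n$ in a thin collar of $\partial M$, globalising the Fermi-coordinate push from the proof of Theorem~\ref{thm D density}. Let $K_n' \subset \partial M$ be a compact neighborhood of the normal projection of $\mathrm{supp}(\chi_n)$ onto $\partial M$; on a neighborhood of $K_n'$ the inward normal exponential map provides Fermi coordinates $\Phi_n: K_n' \times [0, \delta_n) \to W_n \subset M$ in which $g = dt^2 + h_{ij}(y, t)\, dy^i dy^j$. By smoothness of $h$ and compactness of $K_n'$, we may shrink $\delta_n$ so that the pointwise metric distortion $|v|_{h(y, s)} / |v|_{h(y, t)}$ is bounded by, say, $2$ for all $s, t \in [0, \delta_n]$, $y \in K_n'$. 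Pick smooth $h_n: \R \to \R$ with $h_n(t) = 0$ for $t \leq \delta_n / 4$, $h_n(t) = t$ for $t \geq \delta_n / 2$, and $0 \leq h_n' \leq 3$; define $\tau_n: M \to M$ by $\tau_n(\Phi_n(y, t)) = \Phi_n(y, h_n(t))$ on $W_n$ and $\tau_n = \mathrm{id}$ outside, the two definitions matching smoothly on the overlap where $h_n = \mathrm{id}$. Set $\hat\chi_n := \chi_n \circ \tau_n$. Since $\tau_n$ maps a collar of $\partial M$ into $\partial M$, $\hat\chi_n$ is constant in the normal direction near $\partial M$, so $\partial_\nu \hat\chi_n = 0$ and $\hat\chi_n \in \hat C^\infty_c(M)$. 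A direct Fermi-coordinate computation combining $|h_n'| \leq 3$ with the metric distortion bound gives $|\nabla \hat\chi_n|_g(x) \leq 6 |\nabla \chi_n|_g(\tau_n(x))$, so (iii) is inherited; (i) is trivial, and (ii) follows from $\varrho(x, \tau_n(x)) \leq C \delta_n$ together with the fact that $\chi_n \equiv 1$ on the large sublevel set $\{\tilde r \leq n\}$.

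\textbf{Main obstacle.} The delicate point is the globalisation: on a noncompact manifold with boundary the Fermi collar has nonuniform width, so the pair $(K_n', \delta_n)$ must be chosen simultaneously large enough to cover the near-boundary part of $\mathrm{supp}(\chi_n)$ and small enough that both the metric distortion stays bounded and $\tau_n$ extends smoothly by the identity across $\partial W_n$. Once this bookkeeping is in place, the gradient estimate and the preservation of (i)--(iii) follow by routine computation.
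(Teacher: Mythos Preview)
Your proposal is correct and follows essentially the same strategy as the paper: build first-order cut-offs from a smoothed distance function, then in part (b) precompose with a Fermi-coordinate ``normal crush'' $\tau_n$ built from a function $h_n$ with $0\le h_n'\le 3$ to kill the normal derivative while keeping $|\nabla\hat\chi_n|\le 6|\nabla\chi_n|$.

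Two small points of comparison are worth recording. First, for the forward direction of (a) the paper does not attempt to mollify directly on a manifold with boundary; instead it invokes the smooth Riemannian extension theorem of Pigola--Veronelli to realize $M$ as a domain in a complete boundaryless manifold $N$, performs Gaffney's smoothing there, and restricts. Your ``standard mollification in a locally finite atlas (or a Greene--Wu type smoothing)'' on $M$ itself is not quite off-the-shelf near $\partial M$ and would in practice also pass through an extension, so the paper's route is the cleaner way to say what you mean. Second, in part (b) the paper avoids your ``normal projection of $\mathrm{supp}(\chi_n)$ onto $\partial M$'' (which is not globally well defined) by simply taking a large geodesic ball $B_n$ centered at a fixed $o\in\partial M$ with $\varrho(\partial_0 B_n,\mathrm{supp}\,\tilde\chi_n)\ge 1$ and building the Fermi collar over $\partial M\cap B_n$; this is exactly the bookkeeping you flag as the ``main obstacle'', and their choice resolves it neatly. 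The verification of (ii) is then done by the same displacement bound $\varrho(x,\tau_n(x))\le \delta_n/2$ that you indicate.
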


\begin{proof} a) Assume $M$ is complete. According to \cite[Theorem A]{giona} we can realize $M$ as a domain in a complete Riemannian manifold $N$ without boundary. It is well-known that any such $N$ admits sequences of first order cut-off functions: for example, it goes back to \cite{gaffney} that by smoothing the distance function (to a fixed reference point) one obtains a proper smooth function $f:N\to \IR$ with bounded gradient. Then $\tilde{\chi}_n(x):=\psi(f(x)/n)$, where $\psi:\IR\to [0,1]$ is smooth with compact support and equal to $1$ near $0$, does the job for $N$. Finally $\chi_n:=\tilde{\chi}_n|_M$ does the job for $N$.\\
	If conversely $M$ admits a sequence of first order cut-off functions, one can follow \cite{alberto} (proof of Theorem 2.29) to see that for a fixed $o\in M$ one has $\varrho(x,o)\to \infty$ as $x\to\infty$. We give the simple proof: Pick a compact $K\subset M$ which includes $o$ and pick $n\in\IN$ pick $n$ large enough with $\chi_n=1$ on $K$ and $\left\|\nabla \chi_n \right\|_\infty\leq 1/n$. It follows that for all $x\in M\setminus \mathrm{supp}(\chi_n)$ and all piecewise smooth curves $\gamma:[0,1]\to M$ from $o$ to $x$ one has
	$$
	1=|\chi_n(\gamma(0))-\chi_n(\gamma(1))|\leq L(\gamma)/n,
	$$
	so that taking the infimum over such curves we have shown that for all $n$ there exists a compact $K_n$ such that for all $x\in M\setminus K_n$ one has $\varrho(x,o)\geq n$.

	b) Let $\{\tilde\chi_n\}$ be a sequence of first order cut-off functions in $M$. We are going to use the idea employed in the previous theorem to modify each $\tilde \chi_n$ in such a way that the normal derivative of the modified functions $\chi_n$ vanishes and the other properties of cut-offs still hold.
	
	To achieve this, for every $n$ consider a geodesic ball $B_n$ centered at a fixed point
	$o\in \partial M$ such that $\varrho (\partial_0 B_n, \mathrm{supp}\, \tilde{\chi}_n)\ge 1$.
	Note that this implies that $\varrho(\partial M\setminus B_n, \mathrm{supp}, \tilde{\chi}_n)\geq 1$.
	
	By compactness, there exists $0<\delta_n\leq 1/2$ such that the Fermi coordinates
	$\phi_n$, which map $(y,t)$ to $\gamma_y(t)$, where $\gamma_y$ is the unit speed normal geodesic issuing from $y\in \partial M\cap B_n$, are a diffeomorphism of $(\partial M\cap B_n)\times [0,\delta_n)$ onto an open nbd 
	of $\partial M\cap B_n$ in $M$. Moreover, since $|\Id\phi_n|=1$ on $\partial M\cap B_n$, by taking a smaller $\delta_n$ we may arrange that $|d\phi_n|\leq 2$ on $(\partial M\cap B_n)\times [0,\delta_n)$.

	Next, for every $n$ let $h_n:\R\to \R $ be a smooth function such that
	\[
	h_n(s)=
	\begin{cases}
		0 &\text{if } \, s\leq \delta_n/4\\
		s &\text{if }\, s\geq \delta_n/2,
	\end{cases}
	\quad\text{and}\quad 0\leq h'_n(s)\leq 3,
	\]
	and define a map $\tau_n:B_n\to M$ by
	\[
	\tau_n(x)=
	\begin{cases}
		\phi_n(y,h_n(s)) & \text{if } \, x=\phi_n(y,s)\in U_n\\
		\tau_n(x)= x &\text{otherwise}.
	\end{cases}
	\]
	Notice that:
	\begin{itemize}
		\item[(j)] $\tau_n(x)=x$ if $\varrho(x,\partial M)\ge \delta_n/2$,
		\item[(jj) ] if $x\in B_n$ is such that $\varrho(x,\partial_0 B_n)>\delta_n$ $\partial_0$ and $\varrho(x,\partial M)<\delta_n/2$, then the minimizing geodesic joining $x$ to $\partial M$ lies enterely in $B_n$,
		\item[(jjj)] and therefore $\tau_n$ is defined and  smooth in $$U_n:= \{x\in B_n\,:\, \varrho(x,\partial_0B_n)>\delta_n\},$$ which is an open nbd of $\mathrm{supp}\,\chi_n$,
		\item[(jv)] $|\Id\tau_n|\leq 6$.
	\end{itemize}
	
	Now define $\chi_n(x) = \tilde\chi_n(\tau_n(x))$ for $x\in U_n$ and extend it with $0$ in the complement of $U_n$. The above considerations show that $\chi_n$ is smooth and that $$\mathrm{supp}(\tilde\chi_n - \chi_n)\subseteq \{x\in U_n\,:\, \varrho(x,\partial M)<\delta_n/2\},$$ so that $\chi_n$ is compactly supported. Moreover,
	\[
	\left\| \nabla \chi_n\right\|_\infty=\left\|\nabla \tilde\chi_n\circ \Id\tau_n\right\|_\infty\leq 6 \left\|\nabla \tilde\chi_n\right\|_\infty \to 0 \text{ as } n\to \infty,
	\]
	and, by construction $\partial_\nu \chi_n=0$ on $\partial M$. It remains to show that property (iii) in the statement holds. To this end, let $K$ be a compact set in $M$ and let $ B_R$ be a ball  of radius $R$ centered at $o$ such that $B_{R-1}\supseteq K$. Since $\tilde \chi_n$ is a sequence of cut-offs, there exists $n=n_R$ such that $\tilde \chi_n=1$ on $\overline{ B_R}$ for every $n\geq n_R.$
	
	We claim that for all such $n$'s one has $\chi_n=1$ on $B_{R-1}$ and therefore on $K$, as required to complete the proof. Indeed, by (j) above, $\chi_n(x)=\tilde \chi_n(x)$ if $x\in U_n$ and $\varrho(x,\partial M)\geq \delta_n/2$. It follows that $\chi_n(x)=\tilde \chi_n(x)=1$ if $x\in B_R$ and $\varrho(x,\partial M)\geq\delta_n/2.$  On the other hand, if $x\in B_{R-1} $ and $ \varrho (x,\partial M) \leq \delta_n/2$ the unique minimizing geodesic joining $x$ to $\partial M$ lies entirely in $B_R$ and since $\tilde\chi_n $ is identically equal to $1$ on its image, so is also $\chi_n$. In particular, $\chi_n(x)=1$ and the claim is proved.
\end{proof}

We note that the above construction can be used to give an alternative proof of Theorem~\ref{thm D density}. However, the given proof of Theorem~\ref{thm D density} seems to be somewhat simpler.\vspace{2mm}

\begin{remark}\label{extension} By Theorem A in \cite{giona}, $M$ can be realized as a domain in a smooth Riemannian manifold $N$ without boundary, which can be chosen complete if $M$ is so. Then:
	\begin{itemize}
		\item[$(\alpha)$] every $f\in \hat{C}^\infty_c(M)$ can be extended to a function in $C^\infty_c(N)$,
		\item[$(\beta$)]	every $f_o\in C^\infty_c(\partial M)$ can be extended to a function in $\hat{C}^\infty_c(M)$.
	\end{itemize}
	
	To see $(\alpha)$, given $f\in \hat{C}^\infty_c(M)$, let $V$ be a relatively compact open set in $\partial M$ containing $\text{supp}\, f\cap \partial M$. By compactness, the Fermi coordinates are a diffeomorphism of  $V\times (-r,r)$ onto a tubular  neighborhood $U_r$ of $V$ and it follows that there is a chart $(U_r,\varphi)$ of $N$ such that $\varphi(U_r)=D\times (-r,r)$ and  $\varphi(U_r\cap M)=D\times [0,r)$, where $D$ is a disk in $\mathbb{R}^{m-2}$, in such a way that the normal derivative corresponds to differentiation in the last variable. Using a result of R. T. Seeley, \cite{Seeley}, and multiplying by a function $h(t)$ which is $=0$ if $t<-r/2$ and $=1$ if $t>-r/4$ we extend $f\circ \varphi^{-1}$ to a smooth function defined in $\varphi(U_r)$ which lifted to $N$ provides the required extension $\tilde f$ of $f$. { Note that Seeley's construction implies that 
		there exists a constant $C$ which depends only on the the geometry of $V$ and on $r$ such that $||\tilde f||_{W^{2,2}}\leq C ||f||_{W^{2,2}}$.}
	
	The proof of $(\beta)$ is similar but easier: $f_o\in C^\infty_c(\partial M)$ and $V\subset \partial M$ is a  relatively compact neighborhood of its support, we can extend $f_o$ to a function on $N$ which is compactly supported in $U_r$ by lifting the function defined
	on $V\times (-r,r)$ by $f(u,t)=f_o(u)h(t)$, where $h(t)$ is a smooth cutoff on $\mathbb{R}$ such that $h(t)=1$ if $|t|<r/4$ and $h(t)=0$ if $|t|>r/2$.
	
\end{remark}

We go on to define $\hat{\mathscr{W}}^{2,p}_0(M)$ to be the closure of $\hat{C}^\infty_c(M)$ with respect to the norm $\left\|f\right\|_p+ \left\|\Delta f\right\|_p$.

\begin{remark} For all $f\in  \hat{\mathscr{W}}^{2,p}_0(M)$ one has $f,\Delta f\in L^p(M)$. To see this, note first that clearly $\hat{\mathscr{W}}^{2,p}_0(M)$ is a subspace of $L^p(M)$, as every sequence in $\hat{C}^\infty_c(M)$ which is Cauchy in $\hat{\mathscr{W}}^{2,p}_0(M)$ is also Cauchy in $L^p(M)$. Given $f\in  \hat{\mathscr{W}}^{2,p}_0(M)$, $\phi\in C^\infty_c(\mathring{M})$, pick a sequence $f_n\in \hat{C}^\infty_c(M)$ with $f_n\to f$ in $\hat{\mathscr{W}}^{2,p}_0(M)$. Then $\Delta f_n$ is a Cauchy sequence in $L^p(M)$, and with $h\in L^p(M)$ the limit of $\Delta f_n$ in $L^p(M)$ we have
	$$
	\left\langle f,\Delta \phi\right\rangle = \lim_n \left\langle f_n,\Delta \phi\right\rangle=  \lim_n \left\langle\Delta f_n, \phi\right\rangle= \left\langle\Delta h, \phi\right\rangle,
	$$
	and so $\Delta f\in L^p(M)$.
\end{remark}

We close this section with the following embedding result:

\begin{theorem} Let $1<p\leq 2$. Then for all $f\in \hat{\mathscr{W}}^{2,p}_0(M)$ one has
	\begin{align}\label{defg}
		\left\|\nabla f\right\|_p^2\leq 	(p-1)^{-1}\left\|\Delta f\right\|_p	\left\|f\right\|_p<\infty.
	\end{align}
	In particular, there is a continuous embedding $\hat{\mathscr{W}}^{2,p}_0(M)\subset \hat{W}_0^{1,p}(M)$, the norm of which only depends on $p$.
\end{theorem}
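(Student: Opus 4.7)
The plan is to establish the inequality first for $f\in \hat{C}^\infty_c(M)$ and then extend to $\hat{\mathscr{W}}^{2,p}_0(M)$ by a Cauchy-sequence argument that simultaneously produces the embedding. The key technical device, classical in the case $\partial M=\emptyset$, is the regularized test function $\phi_\epsilon := f\,(f^2+\epsilon)^{(p-2)/2}$ for $\epsilon>0$. For $f\in \hat{C}^\infty_c(M)$ this function is smooth, compactly supported, and, since $(f^2+\epsilon)^{(p-2)/2}$ depends on $f$ only through $f^2$, it inherits the Neumann condition $\partial_\nu\phi_\epsilon=0$ from $\partial_\nu f=0$. Pairing $\phi_\epsilon$ against $\Delta f$ and invoking Green's formula, the boundary contribution vanishes, and a direct differentiation gives
\[
-\int_M \phi_\epsilon \,\Delta f\, \Id\mu \;=\; \int_M (f^2+\epsilon)^{(p-4)/2}\bigl[(p-1)f^2+\epsilon\bigr]\,|\nabla f|^2\,\Id\mu.
\]

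Discarding the nonnegative $\epsilon$-term on the right and using the pointwise bound $|\phi_\epsilon|\leq |f|^{p-1}$ (which holds thanks to $(p-2)/2\leq 0$), Hölder's inequality with the conjugate exponents $p$ and $p/(p-1)$ yields
\[
(p-1)\int_M f^2(f^2+\epsilon)^{(p-4)/2}|\nabla f|^2\,\Id\mu \;\leq\; \|f\|_p^{p-1}\,\|\Delta f\|_p.
\]
As $\epsilon\downarrow 0$ the integrand is monotonically increasing to $|f|^{p-2}|\nabla f|^2$ on $\{f\neq 0\}$, while $\nabla f=0$ almost everywhere on $\{f=0\}$, so monotone convergence upgrades the bound to $(p-1)\int_M |f|^{p-2}|\nabla f|^2\,\Id\mu\leq \|f\|_p^{p-1}\|\Delta f\|_p$. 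A second Hölder step, applied to the pointwise identity $|\nabla f|^p=(|f|^{p-2}|\nabla f|^2)^{p/2}|f|^{p(2-p)/2}$ with exponents $2/p$ and $2/(2-p)$, then converts this into the desired bound $\|\nabla f\|_p^2\leq (p-1)^{-1}\|\Delta f\|_p\|f\|_p$ for every $f\in\hat{C}^\infty_c(M)$.

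To extend the inequality to arbitrary $f\in\hat{\mathscr{W}}^{2,p}_0(M)$, I would pick a sequence $f_n\in\hat{C}^\infty_c(M)$ with $f_n\to f$ and $\Delta f_n\to \Delta f$ in $L^p(M)$ and apply the smooth-case inequality to the differences $f_n-f_m$. This immediately shows that $(\nabla f_n)$ is Cauchy in $L^p$; uniqueness of distributional derivatives identifies its limit with $\nabla f$, so $f\in W^{1,p}(\mathring M)$, and by definition of $\hat{W}^{1,p}_0(M)$ as the closure of $\hat{C}^\infty_c(M)$ in $W^{1,p}(\mathring{M})$ even $f\in\hat{W}^{1,p}_0(M)$. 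Passing to the limit in the inequality then gives both \eqref{defg} and the claimed continuous embedding with norm depending only on $p$. The main delicate point is precisely the regularization for $p<2$, where $|f|^{p-2}$ is singular on the zero set of $f$; the sign $(p-2)/2\leq 0$ is exactly what makes the uniform domination $|\phi_\epsilon|\leq |f|^{p-1}$ and the subsequent monotone-convergence step go through.
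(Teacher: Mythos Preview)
Your argument is correct. The regularization $\phi_\epsilon=f(f^2+\epsilon)^{(p-2)/2}$, the vanishing of the boundary term in Green's formula thanks to $\partial_\nu f=0$, the monotone-convergence passage, and the second H\"older step all go through exactly as you describe; the closure argument for general $f\in\hat{\mathscr W}^{2,p}_0(M)$ is routine and correctly outlined. (One small remark: to kill the boundary integral you only need $\partial_\nu f=0$, not $\partial_\nu\phi_\epsilon=0$, since in Green's formula it is $\partial_\nu f$ that appears when you pair $\Delta f$ against $\phi_\epsilon$.)

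Your route, however, differs from the paper's. The paper does \emph{not} redo the integration-by-parts computation on $M$: instead it reduces to smooth compactly supported $f$, quotes the inequality on complete boundaryless manifolds from \cite{cd,honda}, handles the incomplete boundaryless case by embedding a neighbourhood of $\mathrm{supp}(f)$ into a complete one via \cite{giona}, and finally, when $\partial M\neq\emptyset$, invokes Remark~\ref{extension}$(\alpha)$ to extend $f\in\hat C^\infty_c(M)$ to a function in $C^\infty_c(N)$ for an ambient boundaryless $N$. Your approach is more elementary and self-contained: it reproduces on $M$ the very argument that underlies the cited boundaryless results, and makes transparent \emph{why} the Neumann condition is the right hypothesis (it is precisely what eliminates the boundary contribution). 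The paper's approach is shorter on the page but relies on an extension step and external references; your approach delivers the sharp constant $(p-1)^{-1}$ directly without any extension machinery.
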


\begin{proof} It is sufficient to prove the asserted estimate for all $f\in \hat{C}^\infty_c(M)$. If the boundary of $M$ is empty and $M$ is complete, then this estimate can be found in \cite{cd}, and in its ultimate form with an explicit constant in \cite{honda}. If the boundary of $M$ is empty and $M$ is not complete, then by Theorem A in \cite{giona} one can realize an open relatively compact and smooth neighbourhood of the support of $f$ as a domain in a complete Riemannian manifold without boundary to obtain the result. Finally, in this way, if the boundary of $M$ is nonempty, then the estimate follows from the first part of Remark \ref{extension}.
\end{proof}

\section{Essential self-adjointness of the Neumann-Laplacian}

The main result of this section is:

\begin{theorem}\label{opo} Let $M$ be complete. Then $-\Delta$ with domain of definition $\hat{C}^\infty_c(M)$ is essentially self-adjoint in $L^2(M)$; in particular, $H$ is the unique self-adjoint extension of this operator.
\end{theorem}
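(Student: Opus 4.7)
The plan is to invoke the standard deficiency-index criterion for nonnegative symmetric operators and to combine elliptic boundary regularity with a Strichartz-style cut-off argument based on the first-order Neumann cut-offs already constructed. First I check that $-\Delta$ on $\hat{C}^\infty_c(M)$ is symmetric and nonnegative: for $\phi,\psi\in \hat{C}^\infty_c(M)$, Green's formula combined with $\partial_\nu\phi=0$ gives $\langle -\Delta\phi,\psi\rangle = \langle \nabla\phi,\nabla\psi\rangle$, which is symmetric in $(\phi,\psi)$ and nonnegative on the diagonal. Consequently, essential self-adjointness of $-\Delta$ on $\hat{C}^\infty_c(M)$ is equivalent to the assertion that any $u\in L^2(M)$ satisfying
\begin{align*}
\langle u,\, -\Delta\phi+\phi\rangle = 0 \quad \text{for every } \phi\in \hat{C}^\infty_c(M)
\end{align*}
must vanish identically.

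To exploit this hypothesis, the next step is to upgrade the regularity of such a $u$. Testing against $\phi\in C^\infty_c(\mathring M)\subset \hat{C}^\infty_c(M)$ yields $-\Delta u+u=0$ distributionally on $\mathring M$, so interior elliptic regularity gives $u\in C^\infty(\mathring M)$. To obtain smoothness up to $\partial M$ together with the pointwise Neumann condition $\partial_\nu u=0$, I would work locally in Fermi coordinates near a boundary point and combine a Seeley-type reflection/extension, in the spirit of Remark~\ref{extension}, with standard elliptic regularity for the Neumann boundary value problem. I expect this boundary regularity step to be the main obstacle, since it is precisely here that the weak orthogonality against Neumann-type test functions must be converted into genuine smoothness up to $\partial M$ together with a classical normal-derivative condition.

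Once $u\in C^\infty(M)$ solves $-\Delta u+u=0$ pointwise on $M$ and satisfies $\partial_\nu u=0$ on $\partial M$, the remainder is a direct cut-off computation. Let $(\chi_n)\subset \hat{C}^\infty_c(M)$ be the sequence of first-order Neumann cut-offs from Theorem~\ref{thm Neumann cutoffs}(b). Because $\partial_\nu\chi_n=\partial_\nu u=0$, the function $\chi_n^2 u$ lies in $\hat{C}^\infty_c(M)$ and is an admissible multiplier. Multiplying $-\Delta u+u=0$ by $\chi_n^2 u$, integrating and applying Green's formula — where the boundary term $\int_{\partial M}\chi_n^2 u\,\partial_\nu u\,\Id\sigma$ vanishes — one arrives at
\begin{align*}
\int_M \chi_n^2|\nabla u|^2\,\Id\mu + \int_M \chi_n^2 u^2\,\Id\mu + 2\int_M \chi_n u\,(\nabla \chi_n,\nabla u)\,\Id\mu = 0.
\end{align*}

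Finally, bounding the cross term by Cauchy--Schwarz and Young's inequality,
\begin{align*}
2\left|\int_M \chi_n u\,(\nabla\chi_n,\nabla u)\,\Id\mu\right| \leq \int_M \chi_n^2|\nabla u|^2\,\Id\mu + \|\nabla\chi_n\|_\infty^2\,\|u\|_2^2,
\end{align*}
and absorbing the gradient term leaves $\int_M \chi_n^2 u^2\,\Id\mu \leq \|\nabla\chi_n\|_\infty^2\,\|u\|_2^2$. Letting $n\to\infty$, property~(iii) of Theorem~\ref{thm Neumann cutoffs} gives $\|\nabla\chi_n\|_\infty\to 0$, while dominated convergence on the left ($\chi_n^2 u^2\to u^2$ pointwise with $\chi_n^2 u^2\leq u^2\in L^1(M)$) forces $\|u\|_2=0$, hence $u\equiv 0$. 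This is exactly Strichartz's classical argument transcribed to the boundary setting, made possible precisely by the existence of first-order \emph{Neumann} cut-offs.
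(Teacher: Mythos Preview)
Your strategy is sound and is genuinely different from the paper's. The paper does \emph{not} use the deficiency-index criterion; instead it follows Chernoff's wave-equation philosophy. It first shows (Step~1) that $\mathrm{e}^{-tH}f\in C^\infty(M)$ with $\partial_\nu(\mathrm{e}^{-tH}f)=0$, using that $\mathrm{e}^{-tH}f\in\bigcap_k\dom(H^k)\subset W^{1,2}(\mathring M)$ and then invoking local $W^{2,2}$-Neumann regularity (Lieberman) plus bootstrapping. It then (Step~2) uses finite propagation speed of $\cos(t\sqrt{H})$---via the Davies--Gaffney bounds for the intrinsic metric---together with completeness to conclude that $\dom_c(H)$ is a core, and finally (Step~3) approximates $f\in\dom_c(H)$ by $\chi\,\mathrm{e}^{-tH}f\in\hat C^\infty_c(M)$ as $t\to 0^+$. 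Your route, by contrast, is the direct Strichartz argument transported to the boundary setting: kill the deficiency vector $u$ using the first-order Neumann cut-offs of Theorem~\ref{thm Neumann cutoffs}. Once you grant $u\in C^\infty(M)$ with $\partial_\nu u=0$, your integration-by-parts computation is correct and cleanly gives $u=0$; this is arguably more elementary than the finite-propagation machinery, and it showcases exactly why one wants Neumann cut-offs in the first place.

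The one point that deserves more than the sketch you give is the boundary regularity step, and here your situation is strictly harder than the paper's: the paper's function $h=\mathrm{e}^{-tH}f$ already lies in $W^{1,2}(\mathring M)$ and satisfies the \emph{weak} Neumann identity $\langle\Delta h,\phi\rangle=\langle\nabla h,\nabla\phi\rangle$, so Lieberman's $W^{2,2}$-estimate applies directly. Your $u$ is a priori only in $L^2$, and the orthogonality against $\hat C^\infty_c(M)$ is a \emph{very weak} formulation; you must first lift $u$ into $W^{1,2}_{\mathrm{loc}}$ before the standard Neumann boundary regularity kicks in. This can be done---for instance, locally in Fermi coordinates one can use even reflection in the normal variable (the metric satisfies $g_{im}=\delta_{im}$ on $\partial M$, so the reflected operator differs from $\Delta$ only by lower-order terms with bounded coefficients) to reduce to interior regularity, or one can invoke the $L^2$--theory for very weak solutions of the Neumann problem (Lions--Magenes, or duality against the $W^{2,2}$-a priori estimate). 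But the Seeley extension of Remark~\ref{extension} alone does not do this: that remark extends a \emph{given} smooth function, whereas here you must \emph{produce} smoothness from an $L^2$ orthogonality condition. If you fill in this step, your proof is complete and gives a nice alternative to the paper's approach; what it buys is a shorter, more classical argument that avoids Davies--Gaffney bounds and the abstract finite-propagation lemma, at the cost of a slightly more delicate regularity input.
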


To the best of our knowledge, this is the first result that deals with the essential self-adjointness of the Laplacian on a noncompact Riemannian manifold with boundary. The only mildly related result we are aware of is \cite{perez}, which deals with complex Riemannian manifolds that have a certain symmetry and that satisfy some additional analytic assumptions; there, the authors prove - with completely different methods - the essential self-adjointness of the Laplacian induced by the $\overline{\partial}$ operator.

In the proof of Theorem \ref{opo} we will use the following lemma, whose content is that the domain of $H$ is invariant under multiplication by functions in $\hat{C}^\infty_c(M)$:

\begin{lemma} \label{lemma-dom-H}
	Let $f\in\dom (H)$ and let $\chi\in \hat{C}^\infty_c(M)$. Then $f\chi\in  W^{1,2}(\mathring{M})$, $\Delta(f\chi) = \chi \Delta f + 2(\nabla f, \nabla \chi) +f \Delta \chi\in L^2(M)$ and, for all $\phi\in W^{1,2}(\mathring{M})$,
	\[
	\int_M \phi \Delta(\chi f) \,\Id\mu = -\int_M (\nabla \phi , \nabla (\chi f)) \,\Id\mu.
	\]
	It follows that  $\chi f\in \dom (H)$.
\end{lemma}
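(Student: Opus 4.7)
The plan is to establish the three assertions—Sobolev membership, the product formula for the distributional Laplacian, and the Green identity—in sequence, doing all distributional calculus on the open interior $\mathring M$ and only invoking the Neumann condition $\partial_\nu\chi=0$ at the very last step. That $\chi f\in W^{1,2}(\mathring M)$ is immediate from the Sobolev Leibniz rule, since $\chi$ is smooth and compactly supported with bounded derivatives and $f\in\dom(H)\subset W^{1,2}(\mathring M)$, so $\nabla(\chi f)=\chi\nabla f+f\nabla\chi\in L^2(M)$.

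Next, I would verify $\Delta(\chi f)=\chi\Delta f+2(\nabla f,\nabla\chi)+f\Delta\chi$ as a distributional identity on $\mathring M$; each term on the right is manifestly in $L^2(M)$, so this simultaneously yields $\Delta(\chi f)\in L^2(M)$. The point is that for any $\varphi\in C^\infty_c(\mathring M)$ the product $\chi\varphi$ again lies in $C^\infty_c(\mathring M)$, and because $f\in\dom(H)$ one has $\Delta f=-Hf\in L^2(M)$ in the distributional sense on $\mathring M$, hence $\langle f,\Delta(\chi\varphi)\rangle=\langle\Delta f,\chi\varphi\rangle$. Expanding $\chi\Delta\varphi=\Delta(\chi\varphi)-2(\nabla\chi,\nabla\varphi)-\varphi\Delta\chi$ and integrating by parts the cross term against $f$ (legal since all supports stay inside $\mathring M$, so no boundary terms appear) gives the claimed formula.

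The main step is the Green identity, which I would first establish for $\phi\in\hat C^\infty_c(M)$ and then extend by density. Splitting $\int(\nabla\phi,\nabla(\chi f))\,\Id\mu$ via the Leibniz rule and rewriting $\chi(\nabla\phi,\nabla f)=(\nabla(\chi\phi),\nabla f)-\phi(\nabla\chi,\nabla f)$, the piece $\int(\nabla(\chi\phi),\nabla f)\,\Id\mu$ is converted using the defining property of $H$ applied to the test function $\chi\phi\in W^{1,2}(\mathring M)$ into $-\int\chi\phi\,\Delta f\,\Id\mu$. The leftover term $\int f(\nabla\phi,\nabla\chi)\,\Id\mu$ I would handle by applying Green's first identity to the smooth $\chi$ against the compactly supported function $f\phi\in W^{1,2}(\mathring M)$: approximate $f\phi$ in $W^{1,2}$ by elements of $C^\infty_c(M)$ using the density result of \cite{giona}, pass to the limit in the classical Green formula, and use $\partial_\nu\chi=0$ to drop the boundary integral, obtaining $\int f(\nabla\phi,\nabla\chi)\,\Id\mu+\int\phi(\nabla f,\nabla\chi)\,\Id\mu=-\int f\phi\,\Delta\chi\,\Id\mu$. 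Assembling the pieces reproduces exactly $-\int\phi\,\Delta(\chi f)\,\Id\mu$ via the formula from the previous paragraph.

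Because $\Delta(\chi f)$ and $\nabla(\chi f)$ both lie in $L^2(M)$, the identity just proved for $\phi\in\hat C^\infty_c(M)$ extends continuously in $\phi$ to all of $W^{1,2}(\mathring M)=\hat W^{1,2}_0(M)$ by Theorem \ref{thm D density}, and the variational characterization of $H$ then places $\chi f$ in $\dom(H)$. The one genuinely delicate point is the Green identity for $f\phi$: since $f$ need not be smooth up to $\partial M$, the vanishing of the boundary contribution is not automatic and essentially requires both the Neumann condition on $\chi$ and the $C^\infty_c(M)$-density in $W^{1,2}(\mathring M)$ from \cite{giona}.
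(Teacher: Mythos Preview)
Your argument is correct and follows the same route as the paper: verify the Leibniz formula for $\Delta(\chi f)$ distributionally on $\mathring M$, then establish the Green identity by writing $\chi(\nabla\phi,\nabla f)=(\nabla(\chi\phi),\nabla f)-\phi(\nabla\chi,\nabla f)$, invoking the defining property of $H$ on the test function $\chi\phi\in W^{1,2}(\mathring M)$, and handling $\int f\phi\,\Delta\chi$ via Green's formula together with $\partial_\nu\chi=0$. The one difference is organizational: the paper carries out the Green identity directly for arbitrary $\phi\in W^{1,2}(\mathring M)$ in a single pass (observing that $f\phi\in W^{1,1}(\mathring M)$, which already suffices for Green's formula against the compactly supported smooth $\chi$), whereas you first restrict to $\phi\in\hat C^\infty_c(M)$ and then extend by the density result of Theorem~\ref{thm D density}. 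That detour imports a completeness hypothesis the lemma itself does not carry; the paper's direct argument keeps the lemma valid on any $M$.
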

\begin{proof}[Proof of  Lemma~\ref{lemma-dom-H}]
	Clearly $\chi f\in W^{1,2}(\mathring{M})$ and a computation shows that, for every  $\phi\in C^\infty_c(\mathring{M})$,
	\[
	\begin{split}
		\int_M \chi f \Delta \phi \,\Id\mu &=- \int_M (\nabla \phi, \chi\nabla f +f \nabla \chi) \,\Id\mu \\
		&= -\int_M (\nabla(\chi \phi),\nabla f)\,\Id\mu + \int_M\phi(\nabla \chi, \nabla)\,\Id\mu +  \int_M \phi \operatorname{div} (f\chi) \,\Id\mu\\
		&= \int_M \phi [\chi\Delta f + 2(\nabla \chi, \nabla f) +f\Delta \chi] \,\Id\mu,
	\end{split}
	\]
	where we have used that $\chi \phi \in W^{1,2}(\mathring{M}),$   $f\in \dom(H)$ and that $f\nabla\chi$ is a  $W^{1,2}$ vector field with compact support in $\mathring{M}.$ Thus
	\[
	\Delta(\chi f)= \chi\Delta f + 2(\nabla \chi, \nabla f) +f\Delta \chi \in L^2(M).
	\]
	Next let  $\phi\in W^{1,2}(\mathring{M})$; using the fact that $\chi f\in W^{1,2}(\mathring{M})$, $f\phi\in W^{1,1}(\mathring{M})$ and that $\partial_\nu \chi=0$ we obtain
	\[
	\begin{split}
		\int_M \phi\Delta(\chi f) \,\Id\mu&= 
		\int_M (\chi\phi) \Delta f + 2(\nabla f, \phi \nabla \chi ) +f \phi\Delta \chi] \,\Id\mu\\
		&=\int_M [-(\nabla(f, \nabla(\chi \phi)+ 2(\nabla f, \phi\nabla \chi) - (\nabla(f\phi),\nabla \chi)]\,\Id\mu +\int_{\partial M} f\phi\partial_\nu \chi \,\Id\mu\\
		&= -\int_M (\nabla \phi, \nabla (\chi f))\,\Id\mu,
	\end{split}
	\]
	as required to conclude the proof of the lemma.
\end{proof}

\begin{proof}[Proof of Theorem~\ref{opo}] We are going to show that $\hat{C}^\infty_c(M)$ is an operator core for $H$. To this end, our proof will follow Chernoff's philosophy \cite{chernoff} of using the wave equation for proving essential self-adjointness (see also \cite{post, kolb}). However, in the presence of boundary, some additional care has to be taken in order obtain enough regularity for functions in the intersection of the spaces $\dom(H^k)$.\vspace{1mm}
	
	Step 1: For all $t>0$, $f\in L^2(M)$ one has $\mathrm{e}^{-tH}f\in C^\infty(M)$ with $\partial_\nu(\mathrm{e}^{-tH}f)=0$.\vspace{1mm}
	
	Proof of step 1: By spectral calculus we have
	$$
	h:=\mathrm{e}^{-tH}f\in \bigcap^\infty_{k=0} \dom(H^k)\subset \{f\in W^{1,2}(\mathring{M}) :\Delta^k f\in L^2(M)\quad\text{for all $k$}\},
	$$
	and it satisfies the weak Neumann boundary condition
	$$
	\left\langle \Delta h,\phi\right\rangle= \left\langle \nabla h,\nabla \phi\right\rangle\>,
	$$
	for all $\phi\in W^{1,2}(\mathring{M})$.
	
	Clearly $h$ is smooth in $\mathring{M}$ due to local interior elliptic regularity and Sobolev embedding. We need to prove that $h$ is smooth up to the boundary and that $\partial_\nu h=0$. So we fix an arbitrary $x\in \partial M$ and pick a { relatively compact coordinate neighbourhood $N'\subset M$ of $x$ such that $N:=\overline{N'}$ is a smooth manifold with boundary. Let also $V\Subset N'$ be a neighborhood of $x$ and let $\chi, \rho \in C^\infty(M)$ by such that $\chi=1$ on $V$, $\rho=1$ on $\text{supp\,}\chi $ and $\text{supp}\, \rho \Subset N'$. 
		Since $h\in \dom(H)$,  by Lemma~\ref{lemma-dom-H},
		$$
		h_0:=\chi h \in W^{1,2}(\mathring{M}),\quad \Delta h_0=:h_1\in  L^2(M),
		$$
		and the weak Neumann boundary condition holds
		$$
		\left\langle \Delta h_0,\phi\right\rangle= \left\langle \nabla h_0,\nabla \phi\right\rangle\quad\text{for all $\phi\in W^{1,2}(\mathring{M})$}.
		$$
		Thus, given $\phi_0\in W^{1,2}(\mathring{N})$, $\rho\phi_0\in W^{1,2}(\mathring{M})$ and we deduce that
		\[
		\int_N \phi_0 h_1 \,\Id\mu = \int_M\Delta h_0 (\rho\phi_0)\,\Id\mu =-\int_M (\nabla h_0,\nabla(\rho\phi_0))\,\Id\mu = \int_N(\nabla h_0, \nabla\phi_0) \,\Id\mu.
		\]
		with $h_1=\Delta h_0\in L^2(N)$
		
		It follows from \cite[Theorem 5.9]{Lieberman} that $h_0\in W^{2,2}(\mathring{N})$. } Bootstrapping this argument leads to $h_0\in W^{k,2}(\mathring{N})$ for all $k$, and so $h_0\in C ^\infty(N)$ by Sobolev embedding and so $h\in C ^\infty(M)$. Finally, the weak Neumann boundary condition and Green's formula in combination with the second part of Remark \ref{extension} give $\partial_\nu h=0$. \vspace{1mm}
	
	Step 2: The space
	$$
	\dom_c(H):= \{f\in \dom(H): \text{$f$ has a compact support}\}
	$$
	is an operator core for $H$, that is, $\dom_c(H)$ is dense in the norm $\left\|f\right\|_{H}:=\left\|f\right\|+\left\|H f\right\|$ in $\dom(H)$. \vspace{1mm}
	
	Proof of step 2: Note first that by Remark \ref{edcv}, Theorem \ref{aux1} and Theorem \ref{aux3} we get that for all $U_1,U_2\subset M$ open and disjoint, $f_1,f_2\in L^2(M)$ with $\mathrm{supp}(f_i)\subset U_i$, $i=1,2$, and all $0<s<\varrho(U_1,U_2)$, one has
	\begin{align}
		\langle\cos(s\sqrt{H})f_1,f_2  \rangle =0.\label{ga2}
	\end{align}
	The latter fact implies that if $\mathrm{supp}(f)\subset \IB(x_0,r)$ for some $x_0\in M$, $r>0$, then for any $t>0$ one
	has
	\begin{align}
		\mathrm{supp}(\cos (t\sqrt{H})f)\subset
		\overline{\IB(x,r+t)},
	\end{align}
	and this closed ball is compact due to the completeness of $M$ (since completeness on $M$ is equivalent to the Heine-Borel property of $M$ \cite{giona}). Now the claim follows from an abstract functional analytic fact (cf. Theorem \ref{aux2} below).\vspace{1mm}

	Step 3: $\hat{C}^\infty_c(M)$ is dense in $\dom_c(H)$ in the norm $\left\|\cdot\right\|_{H}$.\\
	Proof of step 3: let $f\in \dom_c(H)$. Pick a function $\chi\in C^{\infty}_c(M)$ with $\partial_\nu \chi=0$ and $\chi= 1$ on $\mathrm{supp}(f)$. By step 1 we have $\chi e^{-tH}f\in \hat{C}^\infty_c(M)$ and
	$$
	\left\|\chi e^{-tH}f-f\right\|=\left\|\chi \left(e^{-tH}f-f\right)\right\|\to 0
	$$
	as $t\to 0+$. Moreover
	\begin{align*}
		H(\chi e^{-tH}f-f)  & =  H\left[\chi \left(e^{-tH}f- f\right)\right]\\
		&=\chi\left( H  \e^{-t H} f -Hf\right)- 2(\nabla \chi,\nabla (\e^{-t H} f-f)) - (H\chi )(\e^{t H} f -f),
	\end{align*}
	which shows that $\left\|H (\chi e^{-tH}f-f)\right\|\to 0$ as $t \to 0+$, completing the proof.
\end{proof}

\begin{example} Let us show that one cannot drop completeness in Theorem \ref{opo}. Pick $M:=[1,\infty)$, $\psi:=1$, and fix the Riemannian metric on $M$  as  $g(x)\coloneqq 1/x^4$. Then, the metric distance between two points $p,q$ is given by  $\varrho(p,q) = \operatorname{sgn}(q-p)(1/p - 1/q)$ and $M$ is not complete (the sequence $p_n=n$ is Cauchy but does not converge to any point in $M$). The Laplace-Beltrami operator is given in this case by
	\begin{equation*}
		-\Delta f= -x^2\frac{\Id}{\Id x}\left(x^2 f\right).
	\end{equation*}
	Chose $c_1$ and $c_2$ such that
	$$
	\hat{f}(x) \coloneqq  c_1\cosh(1/x) + c_2 \sinh(1/x)
	$$
	satisfies $\frac{\Id\hat{f}}{\Id x}_{|x=1}=0$. It is straightforward to check that $-\Delta \hat{f} = -\hat{f}$. By the boundedness of $\hat{f}$, it follows that $\hat{f}, \Delta \hat{f} \in L^2(\mathring{M})$, and by the Neumann boundary condition at $x=1$ it follows that
	$$
	\langle -\Delta h, \hat{f}\rangle = \langle  h, -\Delta\hat{f}\rangle\quad\text{ for every $h\in \hat{C}^\infty_c(M)$}.
	$$
	That is, for $H_0$ defined as $-\Delta$ with domain of definition $\hat{C}^\infty_c(M)$ one has $\hat{f}\in \dom(H^*_0)$ and $H_{0}^*\hat{f}= -\Delta \hat{f}$. As a consequence,  $(H_{0}+1)^*\hat{f} = 0$. Therefore, $(H_{0}+1)^*=H_{0}^*+1$ has a nontrivial kernel and we conclude that $H_{0}$ is not essentially self-adjoint in $L^2(M)$.
\end{example}

\vspace{1mm}

\emph{Acknowledgements:} The authors would like to thank Diego Pallara, Stefano Pigola and Giona Veronelli for very helpful discussions.

\section{Appendix}

Let $M$ be locally compact seperable metrizable space with $\mu$ a Radon measure on $M$ with full support.

\begin{theorem}\label{aux1} Let $H$ be a self-adjoint nonnegative operator in $L^2(M,\mu)$. Then the following two conditions are equivalent for every distance $\varrho$ on $M$ which induces the original topology:
	\begin{itemize}
		\item[(i)] There are constants $C\geq 1$, $a>0$ such that for all $t>0$, all open $U_1,U_2\subset M$, and all $f_1,f_2\in L^2(M,\mu)$ with $\mathrm{supp}(f_i)\subset U_i$, $i=1,2$, one has
		\begin{align}\label{dg}
			\left|\left\langle \mathrm{e}^{-t H}f_1,f_2 \right\rangle\right|\leq C\mathrm{e}^{a t}\mathrm{e}^{-\varrho(U_1,U_2)^2/(4t)}\left\|f_1\right\|\left\|f_2\right\|.
		\end{align}
		
		\item[(ii)] For all open $U_1,U_2\subset M$, and all $f_1,f_2\in L^2(M,\mu)$ with $\mathrm{supp}(f_i)\subset U_i$, $i=1,2$, one has
		$$
		\left\langle \mathrm{cos}\big(s\sqrt{H}\big)f_1,f_2 \right\rangle =0\>\text{  for all $0<s<\varrho(U_1,U_2)$.}
		$$
	\end{itemize}
\end{theorem}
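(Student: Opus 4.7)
The plan is to pivot on the transmutation formula
\begin{equation*}
e^{-tH}f \;=\; \frac{1}{\sqrt{\pi t}}\int_0^\infty e^{-s^2/(4t)}\cos(s\sqrt{H})f\, ds,
\end{equation*}
which by the spectral theorem is just the scalar identity $e^{-t\lambda}=\frac{1}{\sqrt{\pi t}}\int_0^\infty e^{-s^2/(4t)}\cos(s\sqrt\lambda)\, ds$ ($\lambda\geq 0$) applied to $H$. This formula is the bridge between the two sides of the equivalence, and is morally invertible through Laplace-transform uniqueness.

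For (ii) $\Rightarrow$ (i), I would fix $f_1,f_2$ as in the statement, set $r:=\varrho(U_1,U_2)$, and split the transmutation integral at $s=r$. The part over $(0,r)$ vanishes by hypothesis (ii); the tail over $[r,\infty)$ is controlled using the contractivity $\|\cos(s\sqrt{H})\|\leq 1$ together with the elementary Gaussian tail bound $\int_r^\infty e^{-s^2/(4t)}\, ds \leq (2t/r)\,e^{-r^2/(4t)}$. This gives $|\langle e^{-tH}f_1,f_2\rangle|\leq \frac{2\sqrt{t}}{\sqrt{\pi}\, r}\, e^{-r^2/(4t)}\|f_1\|\,\|f_2\|$. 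The prefactor $\sqrt t/r$ is harmless: in the regime $r\leq\sqrt{t}$ the trivial estimate $\|e^{-tH}\|\leq 1$ together with $e^{-r^2/(4t)}\geq e^{-1/4}$ already settles (i), while for $r>\sqrt{t}$ the prefactor itself is bounded by $2/\sqrt{\pi}$. So (i) holds with an absolute constant $C$ and $a=0$.

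The direction (i) $\Rightarrow$ (ii) is the substantive one and is where the main obstacle lies. With $f_1,f_2$ supported in $U_1,U_2$ and $r:=\varrho(U_1,U_2)>0$, introduce the bounded continuous even function $g(s):=\langle\cos(s\sqrt{H})f_1,f_2\rangle$ on $\mathbb{R}$. The transmutation formula rewrites hypothesis (i) as the growth bound
\begin{equation*}
\left|\int_0^\infty e^{-s^2/(4t)}g(s)\, ds\right|\;\leq\; C\sqrt{\pi t}\, e^{at}\, e^{-r^2/(4t)}\|f_1\|\,\|f_2\|,\qquad t>0,
\end{equation*}
and the task is to conclude that $g\equiv 0$ on $[0,r)$. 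I would dispatch this by a Laplace-transform uniqueness argument: substituting $\sigma:=1/(4t)$ and $\tau:=s^2$, the bound becomes $|\mathcal{L}[h](\sigma)|\leq C'\sigma^{-1/2}e^{a/(4\sigma)}e^{-\sigma r^2}$ for all $\sigma>0$, where $h(\tau):=g(\sqrt\tau)/(2\sqrt\tau)$ and $\mathcal{L}$ denotes the one-sided Laplace transform. Equivalently, $\sigma\mapsto e^{\sigma r^2}\mathcal{L}[h](\sigma)$ is subexponentially bounded on the positive reals; combined with the holomorphy and polynomial boundedness of $\mathcal{L}[h]$ on vertical strips in the right half-plane, a Phragm\'{e}n--Lindel\"{o}f argument forces $h$ to be supported in $[r^2,\infty)$, hence $g\equiv 0$ on $[0,r)$. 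Since $f_1,f_2$ were arbitrary, (ii) follows. This Laplace-inversion step is the core of the Cheeger--Gromov--Taylor method and its sharpening by Sikora, whose quantitative form may be invoked directly for the precise bookkeeping.
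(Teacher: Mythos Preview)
The paper's own proof is a bare citation to Theorem~3.4 of Coulhon--Sikora \cite{sikora}, so there is nothing substantive to compare against beyond that reference. Your sketch is precisely the Coulhon--Sikora argument---the transmutation identity and Gaussian tail split for (ii)$\Rightarrow$(i), and the Laplace/Phragm\'en--Lindel\"of step for (i)$\Rightarrow$(ii)---and you correctly flag that the delicate complex-analytic bookkeeping in the second direction is where one ultimately invokes \cite{sikora}; this is exactly what the paper does wholesale.
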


\begin{proof} This is Theorem 3.4 in \cite{sikora}.
\end{proof}

Note that property (i) above is usually referred to as \emph{Davies-Gaffney bound}, while (ii) is referred to as \emph{finite wave propagation speed}.\vspace{1mm}

Before we state the next theorem, we recall given a self-adjoint operator $H\geq 0$ in $L^2(M,\mu)$ and $t\in \IR$ one defines the unitary operator $\cos(t\sqrt{H})$ via spectral calculus. Then $\cos(t\sqrt{H})$ preserves $\dom(H)$, and for all $f\in \dom(H)$ the function $f(t):=\cos(t\sqrt{H})$ solves the abstract wave equation $f''(t)=-H f(t)$.

\begin{theorem}\label{aux2} Let $H\geq 0$ be a self-adjoint operator in $L^2(M,\mu)$, and assume furthermore that the compactly supported elements $\dom_c(H)$ of $\dom(H)$ are dense in $L^2(M,\mu)$ and that for all $t>0$ one has the mapping property
	$$
	\cos(t\sqrt{H}):\dom_c(H)\longrightarrow \dom_c(H).
	$$
	Then $\dom_c(H)$ is an operator core for $H$.
\end{theorem}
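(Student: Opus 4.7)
The plan is to reduce the coreness statement to a density problem in $L^2(M,\mu)$ and then resolve the latter via a bounded-solution argument for a scalar ODE. Since $H\geq 0$ is self-adjoint, the norm $f\mapsto \|(H+1)f\|$ is equivalent on $\dom(H)$ to the graph norm $\|\cdot\|_H$, and the operator $H+1$ maps $\dom(H)$ bijectively onto $L^2(M,\mu)$. Consequently $\dom_c(H)$ is a core for $H$ if and only if $(H+1)\dom_c(H)$ is dense in $L^2(M,\mu)$. By the standard duality criterion, this is equivalent to $\ker(H_0^{*}+1)=\{0\}$, where $H_0$ denotes the symmetric restriction of $H$ to $\dom_c(H)$. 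Accordingly I fix $g\in L^2(M,\mu)$ satisfying $H_0^{*}g=-g$, i.e.\
\[
\langle H\phi, g\rangle = -\langle\phi, g\rangle\qquad\text{for every }\phi\in\dom_c(H),
\]
and aim to show $g=0$.

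The key idea is to promote this identity from $\phi$ to the time-dependent vector $\cos(t\sqrt{H})\phi$, which is exactly where the invariance hypothesis on $\cos(t\sqrt{H})$ enters. I fix $\phi\in\dom_c(H)$ and define
\[
F(t):=\langle \cos(t\sqrt{H})\phi, g\rangle,\qquad t\in\mathbb{R}.
\]
Spectral calculus ensures that $t\mapsto \cos(t\sqrt{H})\phi$ is $C^2$ from $\mathbb{R}$ into $L^2(M,\mu)$ with second derivative $-H\cos(t\sqrt{H})\phi$, and the bound $\|\cos(t\sqrt{H})\phi\|\leq\|\phi\|$ shows that $F$ is $C^2$ on $\mathbb{R}$ with $|F(t)|\leq \|\phi\|\,\|g\|$. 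By the hypothesis, $\cos(t\sqrt{H})\phi$ belongs to $\dom_c(H)$, so it qualifies as a test function in the defining relation for $H_0^{*}$, and therefore
\[
F''(t)=-\langle H\cos(t\sqrt{H})\phi, g\rangle = -\langle\cos(t\sqrt{H})\phi, H_0^{*}g\rangle = \langle\cos(t\sqrt{H})\phi, g\rangle = F(t).
\]

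Thus $F$ is a bounded $C^2$ solution of the scalar ODE $F''=F$ on all of $\mathbb{R}$; the general solution $Ae^{t}+Be^{-t}$ is bounded only when $A=B=0$, so $F\equiv 0$. Evaluating at $t=0$ gives $\langle \phi, g\rangle=0$ for every $\phi\in\dom_c(H)$, and the hypothesized $L^2$-density of $\dom_c(H)$ forces $g=0$. The main delicate point, and the sole location where the mapping property of $\cos(t\sqrt{H})$ is indispensable, is the second equality in the computation of $F''(t)$: the definition of $H_0^{*}$ only accepts test functions from $\dom_c(H)$, so without the invariance of $\dom_c(H)$ under $\cos(t\sqrt{H})$ the identity $\langle H_0\phi,g\rangle=-\langle\phi,g\rangle$ could not be propagated in time and the ODE argument would collapse.
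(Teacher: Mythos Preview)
Your proof is correct and is precisely the Chernoff-type argument the paper invokes: the paper does not give its own proof but defers to (the proof of) Theorem~3 in \cite{kolb}, where the same bounded-ODE trick with $t\mapsto\langle\cos(t\sqrt{H})\phi,g\rangle$ is carried out. Your write-up is thus a clean unpacking of that citation; the only cosmetic point is that the hypothesis is stated for $t>0$, but evenness of $\cos$ extends the mapping property to all $t\in\mathbb{R}$, which you implicitly use.
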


\begin{proof} See (the proof of) Theorem 3 in \cite{kolb}.
\end{proof}

\begin{theorem}\label{aux3}  Let $\mathscr{E}$ be a strongly local regular Dirichlet form in $L^2(M,\mu)$, with $H\geq 0$ the associated self-adjoint operator, and with $\Gamma$ the associated energy measure. Assume also that the intrinsic distance
	$$
	\varrho_{\mathrm{intr}}(x,y):= \sup\{f(x)-f(y): f\in \dom_\loc(\mathscr{E})\cap C(M), \Id\Gamma(f)\leq \Id\mu\}
	$$
	induces the original topology on $M$. Then $H$ satisfies (\ref{dg}) with $C=1$ and $a=0$.
\end{theorem}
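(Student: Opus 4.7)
The plan is to run Davies' exponential perturbation argument in the strongly local Dirichlet form setting, in the form developed by Sturm. Write $d:=\varrho_{\mathrm{intr}}(U_1,U_2)$; we may assume $0<d<\infty$, the remaining cases being trivial. As a weight I would take
$$
\psi(x):=\min\bigl\{\varrho_{\mathrm{intr}}(x,U_1),\,d\bigr\},
$$
which is bounded and continuous, identically $0$ on $U_1$ and identically $d$ on $U_2$. Using the sup-definition of $\varrho_{\mathrm{intr}}$ given in the hypothesis, together with the stability of the class $\{f\in\dom_{\loc}(\mathscr{E})\cap C(M):\Id\Gamma(f)\leq\Id\mu\}$ under pointwise infima and truncation---a standard fact for strongly local regular Dirichlet forms---one checks that $\psi$ lies in this class, so $\Id\Gamma(\psi)\leq\Id\mu$.

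Next, for each $\alpha>0$, I would derive the twisted $L^2$-estimate
$$
\|\mathrm{e}^{\alpha\psi}\mathrm{e}^{-tH}f\|_2\leq \mathrm{e}^{\alpha^2 t}\|\mathrm{e}^{\alpha\psi}f\|_2.
$$
Taking $f$ in a suitable dense subset of $\dom(H)$ and setting $u(t):=\mathrm{e}^{-tH}f$, $J(t):=\|\mathrm{e}^{\alpha\psi}u(t)\|_2^2$, one has $J'(t)=-2\mathscr{E}(\mathrm{e}^{2\alpha\psi}u(t),u(t))$. The chain and Leibniz rules for the energy measure decompose this into a diagonal term $-2\int\mathrm{e}^{2\alpha\psi}\Id\Gamma(u)$ and a mixed term $-4\alpha\int u\,\mathrm{e}^{2\alpha\psi}\Id\Gamma(\psi,u)$. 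The pointwise Cauchy--Schwarz for $\Gamma$ together with Young's inequality (tuned to absorb the diagonal term), combined with the bound $\Id\Gamma(\psi)\leq\Id\mu$, yield $J'(t)\leq 2\alpha^2 J(t)$, and Gronwall finishes the claim.

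Finally, for $f_i\in L^2(M,\mu)$ with $\mathrm{supp}\,f_i\subset U_i$, the facts $\psi=0$ on $\mathrm{supp}\,f_1$ and $\psi=d$ on $\mathrm{supp}\,f_2$ give, via Cauchy--Schwarz,
$$
\bigl|\langle\mathrm{e}^{-tH}f_1,f_2\rangle\bigr|\leq\bigl\|\mathrm{e}^{\alpha\psi}\mathrm{e}^{-tH}f_1\bigr\|_2\bigl\|\mathrm{e}^{-\alpha\psi}f_2\bigr\|_2\leq \mathrm{e}^{\alpha^2 t-\alpha d}\|f_1\|_2\|f_2\|_2,
$$
and minimizing in $\alpha>0$ at $\alpha=d/(2t)$ produces the Gaussian factor $\mathrm{e}^{-d^2/(4t)}$, i.e.\ \eqref{dg} with $C=1$ and $a=0$. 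The main technical obstacle I anticipate is the first step: verifying $\Id\Gamma(\psi)\leq\Id\mu$ for the distance-type $\psi$ is not purely formal, since the sup in the definition of $\varrho_{\mathrm{intr}}$ and the infimum implicit in the distance to a set must be reconciled with the Dirichlet space structure; this is precisely where the assumption of strong locality is used in an essential way.
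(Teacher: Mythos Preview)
Your sketch is correct and follows precisely the Davies--Sturm exponential perturbation method. Note, however, that the paper does not actually prove this statement: its entire proof is the single sentence ``This is Theorem~0.1 in \cite{sturm2}'', so there is no independent argument to compare against. What you have written is essentially an outline of Sturm's proof in \cite{sturm2}, and you have correctly identified the only nontrivial point---namely that the truncated distance function $\psi(x)=\min\{\varrho_{\mathrm{intr}}(x,U_1),d\}$ belongs to $\dom_{\loc}(\mathscr{E})\cap C(M)$ with $\Id\Gamma(\psi)\leq\Id\mu$, which Sturm establishes via approximation and the lattice/truncation properties of strongly local Dirichlet forms.
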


\begin{proof} This is Theorem 0.1 in \cite{sturm2}. Note that, in general, $\varrho_{\mathrm{intr}}$ is only a pseudo metric, that is, it can attain the value $+\infty$ or may be degenerate (but nevertheless induces a topology).
\end{proof}

\end{document}